\newtheorem{theorem}{Theorem}
\newtheorem{lemma}{Lemma}
\newcommand{\sign}{\mathrm{sign}}
\begin{document}

\title[Poincar\'e 3/2 ]{Poincar\'e inequality 3/2 on the Hamming cube}
\author[Paata Ivanisvili and  Alexander Volberg]{Paata Ivanisvili and Alexander Volberg}
\address{Department of Mathematics, Kent State University, Kent, OH 44240}
\email{ivanishvili.paata@gmail.com}
\address{Department of Mathematics, Michigan State University} 
\email{volberg@math.msu.edu}
\thanks{AV is partially supported by the NSF grant DMS-1600065 and by the Hausdorff Institute for Mathematics, Bonn, Germany}
\subjclass[2010]{42B37, 52A40, 35K55, 42C05, 60G15, 33C15, 46G12} 

\keywords{
}
\begin{abstract} For any $n \geq 1$, and any $f :\{-1,1\}^{n} \to \mathbb{R}$ we have 
\begin{align*}
\Re\, \mathbb{E}\,  (f + i\,  |\nabla f|)^{3/2} \leq \Re\,  (\mathbb{E}f)^{3/2},
\end{align*}
where  $z^{3/2}$ for $z=x+iy$ is taken with principal branch and  $\Re$ denotes the real part. 
\end{abstract}
\maketitle

\section{A peculiar function 3/2}
Fix any integer $n \geq 1$ and consider the Hamming cube $\{-1,1\}^{n}$ equipped with the uniform counting measure $d\mu$.   Let $f : \{-1, 1\}^{n} \to\mathbb{R}$ be an arbitrary function. Define the \emph{directional derivative} at point $x=(x_{1}, x_{2}, \ldots, x_{n}) \in \{-1,1\}^{n}$ as follows
\begin{align*}
\nabla_{j} f(x) \stackrel{\mathrm{def}}{=}  \frac{1}{2} \left(f\underbrace{(x_{1},x_{2}, \ldots, 1, \ldots, x_{n})}_{\text{set $1$ on $j$-th place}} -f\underbrace{(x_{1},x_{2}, \ldots, -1, \ldots, x_{n})}_{\text{set $-1$ on $j$-th place}}  \right).
\end{align*}
Next we define the \emph{gradient} as $\nabla f = (\nabla_{1} f, \ldots, \nabla_{n} f)$ and set
\begin{align*}
|\nabla f (x)|^{2}  \stackrel{\mathrm{def}}{=} \sum_{j=1}^{n} |\nabla_{j} f(x)|^{2}\quad \text{for all} \quad  x \in \{-1,1\}^{n}.
\end{align*}
Our goal is to prove the following theorem
\begin{theorem}
For any $n \geq 1$, and any $f :\{-1,1\}^{n} \to \mathbb{R}$ we have 
\begin{align}\label{mth1}
\Re\, \mathbb{E}\,  (f + i\,  |\nabla f|)^{3/2} \leq \Re\,  (\mathbb{E}f)^{3/2},
\end{align}
where  $z^{3/2}$ for $z=x+iy$ is taken with principal branch and  $\Re$ denotes the real part. 
\end{theorem}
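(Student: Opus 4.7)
My plan is induction on $n$, reducing the inductive step to a two-point inequality for the harmonic function $\Phi(x,y):=\Re(x+iy)^{3/2}$ on the closed upper half-plane $\{y\ge 0\}$. Three properties of $\Phi$ will be used throughout: harmonicity $\Phi_{xx}+\Phi_{yy}=0$, since $\Phi$ is the real part of a holomorphic function; positive $3/2$-homogeneity $\Phi(\lambda x,\lambda y)=\lambda^{3/2}\Phi(x,y)$ for $\lambda\ge 0$; and monotonicity in $y$, namely $\partial_y\Phi(x,y)=-\tfrac{3}{2}\Im(x+iy)^{1/2}\le 0$ on $\{y\ge 0\}$, because the principal branch of $z^{1/2}$ preserves the upper half-plane.

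For the inductive step, split $\{-1,1\}^n=\{-1,1\}^{n-1}\times\{-1,1\}$, set $f_\pm(x'):=f(x',\pm 1)$ and $G(x'):=\tfrac12(f_+(x')-f_-(x'))$, and let $\nabla'$ denote the gradient in the first $n-1$ coordinates. A direct computation gives the pointwise identities $|\nabla f(x',\pm 1)|^2=|\nabla' f_\pm(x')|^2+G(x')^2$, so averaging in $x_n$ yields
\begin{align*}
2\,\mathbb{E}\,\Phi(f,|\nabla f|)=\mathbb{E}_{x'}\!\left[\Phi\bigl(f_+,\sqrt{|\nabla' f_+|^2+G^2}\bigr)+\Phi\bigl(f_-,\sqrt{|\nabla' f_-|^2+G^2}\bigr)\right].
\end{align*}
The engine of the induction is the following two-point inequality: for all $a,b\in\mathbb{R}$ and $u,v\ge 0$,
\begin{align*}
\Phi\bigl(a,\sqrt{u^2+(a-b)^2/4}\bigr)+\Phi\bigl(b,\sqrt{v^2+(a-b)^2/4}\bigr)\le 2\,\Phi\!\left(\frac{a+b}{2},\frac{u+v}{2}\right). \tag{$\ast$}
\end{align*}
Applying $(\ast)$ pointwise with $(a,b,u,v)=(f_+(x'),f_-(x'),|\nabla' f_+|(x'),|\nabla' f_-|(x'))$ (so that $(a-b)/2=G(x')$) and writing $F:=\tfrac12(f_++f_-)$, we obtain $2\mathbb{E}\,\Phi(f,|\nabla f|)\le 2\mathbb{E}_{x'}\Phi\bigl(F,\tfrac12(|\nabla' f_+|+|\nabla' f_-|)\bigr)$. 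The triangle inequality $|\nabla' F|\le\tfrac12(|\nabla' f_+|+|\nabla' f_-|)$ combined with the monotonicity of $\Phi$ in its second argument then gives the further bound $\le 2\mathbb{E}_{x'}\Phi(F,|\nabla' F|)$, and the inductive hypothesis applied to $F$ on $\{-1,1\}^{n-1}$ closes the step, since $\mathbb{E}_{x'}F=\mathbb{E}f$. The base case $n=1$ is precisely $(\ast)$ with $u=v=0$.

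The main obstacle is therefore $(\ast)$. The inequality is tight on the two-dimensional locus $\{a=b,\,u=v\}$, and a computation of the transverse Hessian of the deficit at that locus reveals that it is only positive \emph{semi}-definite, the one-dimensional kernel being forced by the harmonicity $\Phi_{xx}+\Phi_{yy}=0$; hence $(\ast)$ is critical to second order and no naive convexity argument will close it. I see two realistic routes. The first is to exhibit a Bellman-type representation of the deficit as an integral over the upper half-plane against a manifestly non-negative kernel, starting from the formula $z^{3/2}=\frac{3}{4\sqrt\pi}\int_0^\infty t^{-5/2}\bigl(1-tz-e^{-tz}\bigr)\,dt$ (valid for $\Re z\ge 0$). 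The second is to fix $(a,b,u,v)$ and prove that $s\mapsto\Phi\bigl(a,\sqrt{u^2+s}\bigr)+\Phi\bigl(b,\sqrt{v^2+s}\bigr)$ is concave in $s\ge 0$ using $\Phi_{yy}\le 0$, and combine this with the $u=v=0$ specialisation of $(\ast)$ together with monotonicity in $y$. In either route the specific exponent $3/2$, through the harmonic function $z^{3/2}$, must enter decisively.
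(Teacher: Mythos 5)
Your reduction step is correct and is essentially the paper's own argument in different clothing: after the change of variables $x=\frac{a+b}{2}$, $a'=\frac{a-b}{2}$, $y=\frac{u+v}{2}$, $b'=\frac{u-v}{2}$, your two-point inequality $(\ast)$ is exactly the main inequality (\ref{lemmain}) of Lemma~\ref{mlemma}, and your coordinate-by-coordinate induction — using the identity $|\nabla f(x',\pm 1)|^{2}=|\nabla' f_{\pm}|^{2}+G^{2}$, the triangle inequality $|\nabla' F|\le\frac12(|\nabla' f_{+}|+|\nabla' f_{-}|)$, and monotonicity of $\Phi$ in the second argument — is the paper's supermartingale argument written as induction on $n$ (the paper itself notes the martingale language is optional). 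So the first half of your proposal reproduces the easy half of the paper.

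The genuine gap is that $(\ast)$ is never proved, and $(\ast)$ is the entire content of the paper. You correctly diagnose why it is hard (equality to second order on $a=b$, $u=v$, forced by harmonicity), but that observation matches the paper's remark that the Barthe--Maurey Brownian-motion method only yields the case $u=v$ (the paper's $b=0$), while the general case is settled there by a long computer-assisted computation: reducing to the inequality (\ref{main}), squaring twice to obtain a cubic polynomial $P(x)$, factoring its discriminant as $\Delta=16777216\,(1+b^{2})^{2}T_{1}T_{2}^{2}T_{3}^{2}T_{4}^{2}b^{6}$, controlling each factor via repeated applications of Sturm's theorem, and checking the signs of the difference function at $\pm\infty$ and at the possible double root. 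Neither of your two proposed routes is carried out, and neither is obviously viable as stated. In route 1 the subordination formula is misstated (for $\Re z\ge 0$ one has $z^{3/2}=\frac{3}{4\sqrt{\pi}}\int_{0}^{\infty}t^{-5/2}\left(e^{-tz}-1+tz\right)dt$, not with your signs), and even corrected, taking real parts and applying it to the four points of $(\ast)$ leaves you with an integrand whose sign is exactly the two-point problem you started with, now for $\Re\,e^{-tz}$; no manifestly nonnegative kernel appears without further ideas. In route 2, concavity of $s\mapsto\Phi(a,\sqrt{u^{2}+s})+\Phi(b,\sqrt{v^{2}+s})$ (even granting it) plus the $u=v=0$ case does not reconstruct $(\ast)$: the right-hand side involves $\frac{u+v}{2}$, which is not an endpoint or average that concavity in $s$ interpolates, and you give no mechanism to pass from $u=v$ to $u\ne v$ — precisely the passage the paper identifies as the essential difficulty. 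As it stands, the proposal reduces the theorem to its hardest ingredient and stops there.
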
 
 
\section{Proof of the theorem}
Let $z=x+iy$ where $x,y \in \mathbb{R}$. For $\arg(z) \in  (-\pi, \pi]$ define  $M(x,y)=\Re\,  z^{3/2}$.  Notice that 

\begin{align*}
M(x,y)=\frac{1}{\sqrt{2}}(2x-\sqrt{x^{2}+y^{2}}) \sqrt{\sqrt{x^{2}+y^{2}}+x}.
\end{align*}
Inequality (\ref{mth1}) takes the form 
\begin{align}\label{mth2}
\mathbb{E} \, M(f, |\nabla f|) \leq M(\mathbb{E}\, f, 0). 
\end{align}
The proof of (\ref{mth2}) essentially is based on the following \emph{main inequality}
\begin{lemma}\label{mlemma}
For any $x,y,a,b \in \mathbb{R}$ we have 
\begin{align}\label{lemmain}
M(x,y) \geq \frac{1}{2}\left(M(x+a, \sqrt{a^{2}+(y+b)^{2}}) + M(x-a,\sqrt{a^{2}+(y-b)^{2}}) \right).
\end{align}
\end{lemma}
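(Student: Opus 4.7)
The plan is to prove \eqref{lemmain} by reducing it, via a one-parameter interpolation, to an infinitesimal concavity condition on $M$. Since $M(x,-y)=\Re\overline{(x+iy)^{3/2}}=M(x,y)$ the function $M$ is even in $y$, and by possibly flipping $(y,b)\mapsto(-y,-b)$ I may assume $y\geq 0$. I then introduce the interpolant
\[
\phi(t):=M\!\bigl(x+ta,\sqrt{t^2a^2+(y+tb)^2}\bigr)+M\!\bigl(x-ta,\sqrt{t^2a^2+(y-tb)^2}\bigr),\qquad t\in\mathbb{R},
\]
so that $\phi(0)=2M(x,y)$ and $\phi(1)$ is twice the right-hand side of \eqref{lemmain}. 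Swapping the two summands gives $\phi(-t)=\phi(t)$, hence $\phi'(0)=0$, and Taylor with integral remainder reduces the inequality to showing $\int_0^1(1-s)\phi''(s)\,ds\leq 0$.

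The heart of the computation is the analysis of $\phi''(s)$. Writing $f(s):=M(P(s),Q(s))$ with $P(s):=x+sa$ and $Q(s):=\sqrt{s^2a^2+(y+sb)^2}$, so $\phi(s)=f(s)+f(-s)$, twice differentiating $Q^2$ and simplifying the combination $(a^2+b^2)Q^2-(QQ')^2$, which turns out to equal exactly $a^2y^2$, yields the \emph{curvature identity}
\[
Q''(s)=\frac{a^2\,y^2}{Q(s)^3}.
\]
Chain rule then gives $f''(s)=a^2M_{PP}+2aQ'M_{PQ}+Q'^2M_{QQ}+Q''M_Q$ at $(P(s),Q(s))$. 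Combining harmonicity $M_{PP}+M_{QQ}=0$ with the polar-form derivatives $M_{PP}=\tfrac{3\cos(\theta/2)}{4\sqrt R}$, $M_{PQ}=\tfrac{3\sin(\theta/2)}{4\sqrt R}$, $M_Q=-\tfrac{3\sqrt R\sin(\theta/2)}{2}$ (with $R=|P+iQ|$, $\theta=\arg(P+iQ)$), together with the half-angle identities $R\pm P=2R\cos^2(\theta/2),\,2R\sin^2(\theta/2)$ and $Q=2R\sin(\theta/2)\cos(\theta/2)$, the value at $s=0$ collapses to a manifestly non-positive perfect square:
\[
f''(0)=-\frac{3\bigl(a\sqrt{R_0-x}-b\sqrt{R_0+x}\bigr)^2}{8R_0^{3/2}\cos(\theta_0/2)}\leq 0,
\]
where $R_0=\sqrt{x^2+y^2}$ and $\theta_0=\arg(x+iy)$; in particular $\phi''(0)=2f''(0)\leq 0$, the infinitesimal concavity.

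The main obstacle is upgrading this infinitesimal bound to the integrated statement $\int_0^1(1-s)\phi''(s)\,ds\leq 0$, because the pointwise estimate $\phi''(s)\leq 0$ can fail for intermediate $s\in(0,1]$. My plan here is to decompose $f''(s)=A(s)+\Lambda(s)$, with $A(s)$ being the ``infinitesimal Hessian estimate at the current base point $(P(s),Q(s))$ in direction $(a,Q'(s))$''---which is $\leq 0$ by the identical perfect-square argument applied at $(P(s),Q(s))$ rather than at $(x,y)$---and $\Lambda(s)=\bigl(Q''(s)-a^2/Q(s)\bigr)M_Q=a^2(y^2-Q(s)^2)M_Q/Q(s)^3$ the curvature correction. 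Because $\phi$ is even, only the symmetrized combination $\tfrac12(f''(s)+f''(-s))$ is integrated, and the symmetry relations between $Q(s)^2$ and $Q(-s)^2$ (which share the average $y^2+s^2(a^2+b^2)$) are expected to arrange the necessary cancellation of the $\Lambda$-contributions against the weight $(1-s)$. Carrying out this cancellation carefully is the main technical step the proof must complete.
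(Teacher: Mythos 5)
Your interpolant $\phi$ is exactly the function $E(t)$ used in the paper, your curvature identity $Q''(s)=a^{2}y^{2}/Q(s)^{3}$ and the perfect-square evaluation $\phi''(0)\le 0$ are correct, and the reduction of \eqref{lemmain} to $\int_0^1(1-s)\phi''(s)\,ds\le 0$ via $\phi'(0)=0$ is legitimate. But the argument stops exactly where the difficulty of the lemma begins: the passage from the infinitesimal bound at $s=0$ to the integrated statement is not carried out, and the mechanism you propose for it does not obviously work. In your decomposition $f''(s)=A(s)+\Lambda(s)$, using $Q(\pm s)^{2}=y^{2}+s^{2}(a^{2}+b^{2})\pm 2sby$, the symmetrized correction is
\begin{align*}
\Lambda(s)+\Lambda(-s)=-a^{2}\Bigl[\bigl(s^{2}(a^{2}+b^{2})+2sby\bigr)\frac{M_{y}(P(s),Q(s))}{Q(s)^{3}}+\bigl(s^{2}(a^{2}+b^{2})-2sby\bigr)\frac{M_{y}(P(-s),Q(-s))}{Q(-s)^{3}}\Bigr],
\end{align*}
and since $M_{y}\le 0$ the $s^{2}(a^{2}+b^{2})$ parts are nonnegative, i.e.\ of the wrong sign, while the two cross terms carry different weights $M_{y}/Q^{3}$ at the two base points, so evenness of $\phi$ produces no automatic cancellation. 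What must be shown is that these positive contributions are dominated, after integration against $(1-s)$, by the nonpositive terms $A(s)+A(-s)$; that is a genuine global inequality in $(x,y,a,b)$, essentially of the same strength as the lemma itself, and you give no argument for it.

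For comparison, the paper proves the stronger pointwise statement $E'(t)\le 0$ on $[0,1]$: after the normalizations $y\ge0$, $|b|\le y$, $a=1$, the identity $M_{x}M_{y}=-\tfrac98 y$ and homogeneity reduce $E'(t)\le 0$ to the single algebraic inequality \eqref{main}, which is then verified by a computer-assisted analysis (squaring down to a cubic polynomial $P(x)$, factorizing its discriminant, Sturm sequences), combined with the negativity of the difference at $x\to\pm\infty$. The paper also notes that soft methods (e.g.\ the Barthe--Maurey Brownian-motion argument) handle only $b=0$; your $s=0$ computation is of the same soft nature, so it is precisely the remaining step that carries all the work for $b\ne0$. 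As it stands, the proposal is an unfinished plan rather than a proof: the "main technical step" you defer is the lemma.
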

Before we proceed to the proof of Lemma~\ref{mlemma} let us explain that (\ref{lemmain}) implies (\ref{mth2}).  First we notice that Lemma~\ref{mlemma} implies a stronger  inequality 
\begin{align}\label{stronger1}
M(x,\| y\|)\geq \frac{1}{2}\left(M(x+a, \sqrt{a^{2}+\| y+b\|^{2}}) +M(x-a, \sqrt{a^{2}+\| y-b\|^{2}})  \right)
\end{align}
for all $x,a \in \mathbb{R}$, all $y,b \in \mathbb{R}^{N}$ and  any $N \geq 1$.  Indeed, by Lemma~\ref{mlemma} we have 
\begin{align*}
&\frac{1}{2}\left(M(x+a, \sqrt{a^{2}+\| y+b\|^{2}}) +M(x-a, \sqrt{a^{2}+\| y-b\|^{2}})  \right) \leq \\
&M\left( x, \frac{\| y+b\| + \| y-b\|}{2}\right) \leq M\left( x, \| y\|\right).
\end{align*}
The last inequality follows from the fact that $\frac{\| y+b\| + \| y-b\|}{2} \geq  \| y \|$ and the map $t \to M(x,t)$ is decreasing for $t \geq 0$:
\begin{align*}
\frac{\partial }{\partial t}M(x,t) = -\frac{3}{2\sqrt{2}} \sqrt{\sqrt{x^{2}+t^{2}}-x}<0, \quad \text{for} \quad t \geq 0.
\end{align*}

It will be convenient for us to use martingale notation but of course one can proceed without invoking these notations. Define the martingale $\{ f_{k}\}_{k=0}^{n}$ as follows: let $f_{k} = \mathbb{E} (f | \mathcal{F}_{k})$ to be the average of the function $f$ with respect to the variables $(x_{k+1}, \ldots, x_{n})$. For example 
\begin{align*}
&f_{n} =f; \\
&f_{n-1} = \frac{1}{2}\left( f(x_{1}, \ldots, x_{n-1},1)+f(x_{1}, \ldots, x_{n-1},-1)\right);\\
&\ldots  \\
&f_{0} = \frac{1}{2^{n}} \sum_{x \in \{-1,1\}^{n}} f(x) = \mathbb{E} f.
\end{align*}
Thus $f_{k}$ lives on $\{-1,1\}^{k}$ for $1 \leq k \leq n$. 

Next we would like to know how the next generation $k+1$ is related to the previous generation $k$. 
For $x \in \{-1,1\}^{k+1}$ let $x=(x',x_{k+1})$ where $x' \in \{-1,1\}^{k}$.   Notice that
\begin{align*}
&f_{k+1}(x', x_{k+1}) = f_{k}(x') +x_{k+1} \cdot g(x');\\
&|\nabla f_{k+1}(x',x_{k+1})|^{2} =  | \nabla_{x'} (f_{k}(x')+x_{k+1}\cdot g(x'))|^{2}+|g(x')|^{2}.
\end{align*}
where $g=g^{k}$ is a function on $\{-1,1\}^{k}$, and $\nabla_{x'}$ denotes gradient taken in $x'$.

 We claim that the following process 
\begin{align*}
z_{k} = M(f_{k}, |\nabla f_{k}|), \quad 0 \leq k \leq n
\end{align*}
is a supermartingale after which the theorem follows immediately:
\begin{align*}
M(\mathbb{E} f, 0) = z_{0} \geq  \mathbb{E} z_{n} = \mathbb{E} M(f, |\nabla f|).
\end{align*}

To verify the claim we notice that  
\begin{align*}
&\mathbb{E} (z_{k+1}| \mathcal{F}_{k})(x') = \frac{1}{2}\left(z_{k+1}(x',1)+z_{k+1}(x',-1) \right)=\\
&\frac{1}{2}\left( M(f_{k}(x')+g(x'),\sqrt{| \nabla_{x'} (f_{k}(x')+ g(x'))|^{2}+|g(x')|^{2}} )+  \right.\\
&\left. M(f_{k}(x')-g(x'),\sqrt{| \nabla_{x'} (f_{k}(x')-g(x'))|^{2}+|g(x')|^{2}} ) \right) \leq \\
&M(f_{k}(x'), |\nabla f_{k}(x')|)=z_{k}.
\end{align*}
The last inequality follows from (\ref{stronger1}) where we set $x=f_{k}(x'), a=g(x'), y = \nabla_{x'} f_{k}(x')$ and $b=\nabla_{x'} g(x')$.

\section{Proof of Lemma~\ref{mlemma}}
  Brownian motion approach  developed in  \cite{Maurey11} can be used to obtain (\ref{lemmain}) but only  for a particular case $b=0$. The general case $ b \neq 0$ is essential for our purposes and it creates difficulty in proving  (\ref{lemmain}).  The proof we are going to present is straightforward and can be checked by hand. However, it is difficult to imagine  how to come up with these computations and identities without using a computer. Our proof is computer assisted. 

Without loss of generality we can make the following assumptions: 1) $y \geq 0$; 2) $|b| \leq y$ (since $M$ is monotone in $y$); 3) $a \neq 0$ (otherwise inequality follows from concavity of the map $y \to M(x,y)$); 4) $ a>0$ (change sign of $b$ if necessary). 

Consider the function 
\begin{align*}
E(t) \stackrel{\mathrm{def}}{=} M(x+at, \sqrt{(at)^{2}+(y+bt)^{2}})+M(x-at, \sqrt{(at)^{2}+(y-bt)^{2}}), \quad t \in [0,1].
\end{align*}
It is enough to show that $E(t)$ is decreasing for $t \in [0,1]$.  We will  set $a=1$ and consider $E(t)$ on the interval $[0,a]$ (but now $b \to b/a$). 
We have 

\begin{align*}
&M_{x} = \frac{3}{2\sqrt{2}} \sqrt{\sqrt{x^{2}+y^{2}}+x};\\
&M_{y} = -\frac{3}{2\sqrt{2}} \sqrt{\sqrt{x^{2}+y^{2}}-x}, \quad y \geq 0. 
\end{align*}

Notice that if we use the fact $M_{x} M_{y} = -\frac{9}{8} y$ we obtain:
\begin{align*}
&E'(t) = M^{+}_{x}+M_{y}^{+} \frac{t+b(y+bt)}{\sqrt{t^{2}+(y+bt)^{2}}}- M_{x}^{-} + M_{y}^{-} \frac{t-b(y-bt)}{\sqrt{t^{2}+(y-bt)^{2}}} =\\
&\frac{9}{8 M^{+}_{x}}\left[(x+t)+ \sqrt{(x+t)^{2}+t^{2}+(y+bt)^{2}}  - (t+b(y+bt)) \right]\\
&-\frac{9}{8 M^{-}_{x}}\left[(x-t)+ \sqrt{(x-t)^{2}+t^{2}+(y-bt)^{2}}  +(t-b(y-bt)) \right]
\end{align*}
Where $M^{+}$ and $M^{-}$ are computed at the points $(x+t, \sqrt{t^{2}+(y+bt)^{2}})$ and $(x-t, \sqrt{t^{2}+(y-bt)^{2}})$ correspondingly.

Next we can always assume (by homogeneity $M(\lambda x, \lambda y)=\lambda^{3/2}M(x,y)$ and considering new variables $\tilde{x} = xt$, $\tilde{y} = yt$) that $t=1$. Thus we need to show that 
\begin{align}\label{main}
\frac{x-by-b^{2}+ \sqrt{(x+1)^{2}+1+(y+b)^{2}}}{\sqrt{x+1+\sqrt{(x+1)^{2}+1+(y+b)^{2}}}} \leq \frac{x-by+b^{2}+ \sqrt{(x-1)^{2}+1+(y-b)^{2}}}{\sqrt{x-1+\sqrt{(x-1)^{2}+1+(y-b)^{2}}}}
\end{align}
and $|b| \leq y$. 


Consider the difference: the left hand side of (\ref{main}) minus the right hand side of (\ref{main})  as a function of $x$, and call it $f(x)$.
 We want to show that $f(x) \leq 0$. The function also depends on $b,y$, in fact $f(x)=f(x,b,y)$ is real analytic in $x,b,y$. 
 
\begin{lemma}
We have 
\begin{align*}
&f(x) =- b^{2} \sqrt{2} \cdot x^{-1/2}+O(x^{-3/2}) \quad \text{as} \quad x \to \infty;\\
&f(x) =-\frac{\sqrt{-2x}  \left((1+b^{2}+by)\sqrt{1+(y-b)^{2}}+(1+b^{2}-by)\sqrt{1+(y+b)^{2}} \right)}{\sqrt{(1+(y+b)^{2})(1+(y-b)^{2})}}+O((-x)^{-1/2})\\
& \text{as} \quad x \to -\infty;
\end{align*}
And the signs of $f(x)$ are  negative at $\pm \infty$. 
\end{lemma}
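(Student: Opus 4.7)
The plan is to analyze $f(x) = A_+(x) - A_-(x)$ where
\[
A_\pm(x) = \frac{x-by\mp b^2 + R_\pm(x)}{\sqrt{x \pm 1 + R_\pm(x)}}, \qquad R_\pm(x) = \sqrt{(x\pm 1)^2 + 1 + (y\pm b)^2},
\]
by Taylor expanding the radicals $R_\pm$ in powers of $1/x$ at each infinity and tracking which terms survive the subtraction.

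For $x \to +\infty$ the cleanest approach is to compute $A_\pm^2$ first and use the identity $A_+ - A_- = (A_+^2 - A_-^2)/(A_+ + A_-)$; this absorbs the delicate cancellation of the two leading $\sqrt{2x}$ asymptotics. From $R_\pm(x) = (x\pm 1) + \frac{1+(y\pm b)^2}{2(x\pm 1)} + O(x^{-3})$ and expansion of $A_\pm^2 = (x-by\mp b^2 + R_\pm)^2/(x\pm 1 + R_\pm)$ one obtains
\[
A_\pm^2(x) = 2x - 2by \mp 2b^2 + O(1/x),
\]
so $A_+^2 - A_-^2 = -4b^2 + O(1/x)$ while $A_+ + A_- = 2\sqrt{2x}\,(1+O(1/x))$. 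Dividing gives the stated $f(x) = -b^2\sqrt{2}\,x^{-1/2}+O(x^{-3/2})$, and for $b \neq 0$ this is strictly negative at $+\infty$.

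For $x \to -\infty$ the asymptotics change qualitatively: since $x \pm 1 < 0$,
\[
R_\pm(x) = -(x\pm 1) - \tfrac{1+(y\pm b)^2}{2(x\pm 1)} + O(|x|^{-3}),
\]
so each numerator $x-by\mp b^2 + R_\pm$ tends to the \emph{finite} constant $\mp 1-by\mp b^2$, while the radicand $x\pm 1 + R_\pm = -\frac{1+(y\pm b)^2}{2(x\pm 1)} + O(|x|^{-3})$ vanishes like $|x|^{-1}$. Hence each $A_\pm$ is of order $\sqrt{-x}$ and the two terms do not cancel. Extracting the leading factor in each ratio and combining over the common denominator $\sqrt{(1+(y+b)^2)(1+(y-b)^2)}$ yields the formula stated in the lemma, with remainder $O((-x)^{-1/2})$.

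The remaining task is to show the sign is negative at $-\infty$, i.e.\ that
\[
S := (1+b^2+by)\sqrt{1+(y-b)^2}+(1+b^2-by)\sqrt{1+(y+b)^2}>0.
\]
The individual factors $1+b^2\pm by$ may be negative, so a short symmetrization is needed. Substituting $u=y+b\ge 0$ and $v=y-b\ge 0$ (nonnegative by assumption (2)), one computes $1+b^2+by = 1+u(u-v)/2$ and $1+b^2-by = 1-v(u-v)/2$, whence
\[
S = \sqrt{1+u^2}+\sqrt{1+v^2}+\tfrac{1}{2}(u-v)\bigl(u\sqrt{1+v^2}-v\sqrt{1+u^2}\bigr).
\]
Because $(u\sqrt{1+v^2})^2 - (v\sqrt{1+u^2})^2 = u^2-v^2$, the second factor of the correction has the same sign as $u-v$ for $u,v\ge 0$, so the product is nonnegative and $S > 0$. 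The hardest part of the whole argument is the $+\infty$ expansion, where numerous lower-order terms cancel; computing $A_\pm^2$ instead of $A_\pm$ directly is the key device that makes these cancellations transparent and reduces the problem to a short algebraic calculation.
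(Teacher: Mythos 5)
Your proof is correct and is essentially the ``straightforward'' asymptotic expansion the paper alludes to: the paper's own proof of this lemma consists only of the remark that it is straightforward (with the case $b=0$ deferred), so your write-up supplies exactly the details that are omitted. The two devices you use --- writing $A_+-A_-=(A_+^2-A_-^2)/(A_++A_-)$ to tame the cancellation at $+\infty$, and the substitution $u=y+b\ge 0$, $v=y-b\ge 0$ with the identity $(u\sqrt{1+v^2})^2-(v\sqrt{1+u^2})^2=u^2-v^2$ to show $S>0$ at $-\infty$ --- check out, and your caveat that strict negativity at $+\infty$ requires $b\neq 0$ is consistent with the paper, which handles $b=0$ separately using only the sign at $-\infty$.
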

\begin{proof}
The proof is pretty straightforward. Case $b=0$ will be mentioned  later.
\end{proof}

Let us try to find possible roots of $f(x)$.


After squaring  (\ref{main}) and simplifying the expressions we end up with the following equation
\begin{align}\label{main1}
C_{A}\cdot A + C_{B} \cdot B + C_{AB} \cdot A \cdot B + L =0
\end{align}
where 
\begin{align*}
&C_{A}=4by - 4b^{2}x + b^{2} - b^{2}y^{2} + 2b^{3} y - b^{4} -2- y^{2};\\
&C_{B}= -4b^{2} x + b^{2}y^{2} + 2b^{3}y + b^{4} +2 + y^{2} + 4by -b^{2};\\
&C_{AB}=-4b^{2};\\
&L=-4-4b^{2}x^{2}+4b^{3}yx-2b^{4}+8byx-2b^{2}-2b^{2}y^{2}-2y^{2};\\
&A= \sqrt{(x+1)^{2}+1+(y+b)^{2}};\\
&B = \sqrt{(x-1)^{2}+1+(y-b)^{2}}. 
\end{align*}

After moving terms $L, C_{AB} \cdot A \cdot B$ to the right hand side of the equation, squaring and moving some terms again, and squaring again  we finally obtain that 
\begin{align*}
(C_{A}^{2}\cdot A^{2}+C_{B}^{2}\cdot B^{2}-L^{2}-C_{AB}^{2}\cdot A^{2}\cdot B^{2})^{2}-4\cdot A^{2}\cdot B^{2}\cdot (C_{AB}\cdot L-C_{A}\cdot C_{B})^{2}=0
\end{align*}
Lets denote the left hand side of the equation by $P(x)$. This is a 3rd degree polynomial in $x$. We have
\begin{align*}
&P(x) =\\
&-128b^{3}y^{3}(b^{2}y^{2}+y^{2}+2+4by+3b^{2}+2b^{3}y+b^{4})(b^{2}y^{2}+y^{2}+2-4by+3b^{2}-2b^{3}y+b^{4})x^{3}\\
&+(-64y^{8}b^{8}+1088b^{6}y^{6}-3392b^{8}y^{4}+8128b^{10}y^{2}+384b^{10}y^{6}-704b^{12}y^{4}+960b^{8}y^{6}-3136b^{10}y^{4}\\
&+3392b^{12}y^{2}+512b^{14}y^{2}-64y^{8}b^{6}+64y^{8}b^{4}+64y^{8}b^{2}-960b^{4}y^{6}+960b^{6}y^{4}+64b^{2}y^{6}\\
&-2816b^{4}y^{2}+1280b^{4}y^{4}+1088b^{6}y^{2}-640b^{2}y^{4}+7872b^{8}y^{2}-1280b^{2}y^{2}-10880b^{8}\\
&-8960b^{10}-3072b^{4}-128b^{16}-7808b^{6}-512b^{2}-4352b^{12}-1152b^{14})x^{2}\\
&+(-1792b^{5}y^3+256b^7y^7-5504b^7y^3-1408b^5y^7+3456b^7y^5-384y^7b^3+640b^9y^5\\
&+2752b^5y^5+1536b^3y^3-5760b^9y^3-3840b^{11}y^3-768b^3y^5+512by+3072b^3y\\
&+1024by^3+1984b^{13}y+384b^{15}y+32b^{17}y+32by^9+10272b^9y+768by^5\\
&+5760b^{11}y+256by^7+32b^9y^9-128b^{11}y^7-1408b^{13}y^3-64b^5y^9\\
&-640b^9y^7+1664b^{11}y^5+192b^{13}y^5-128b^{15}y^3+7936b^5y+11520b^7y)x
\end{align*}
\begin{align*}
&-256-144b^{18}-16y^{10}+688y^8b^8+1504b^6y^6-1920b^8y^4-3440b^{10}y^2\\
&-2304b^{10}y^6+2592b^{12}y^4-192b^8y^6+3264b^{10}y^4-4448b^{12}y^2-352b^{14}y^{2}\\
&-288y^8b^6-224y^8b^4+48y^8b^2-736b^4y^6-1376b^6y^4-320b^2y^6-2816b^4y^2\\
&-480b^4y^4+2496b^6y^2-1792b^2y^4+3056b^8y^2-3072b^2y^2-768y^2-512y^6-896y^4\\
&-144y^8-3344b^8+1584b^{10}-4992b^4-336b^{16}-6656b^6-1792b^2+2528b^{12}\\
&+608b^{14}-64b^{16}y^4+96b^{14}y^6+16y^{10}b^2+32y^{10}b^4+624b^{16}y^2-864b^{14}y^4\\
&+416b^{12}y^6-64b^{12}y^8-16b^{10}y^8-16b^8y^{10}+16b^{10}y^{10}-32y^{10}b^6+16b^{18}y^2
\end{align*}
The discriminant of this polynomial  turns out to factorize as follows:
\begin{align*}
&\Delta = 16777216\cdot (1+b^{2})^{2}\cdot (-8-16b^{2}-8b^{4}-8y^{2}+20b^{2}y^{2}+b^{4}y^{2}-2y^{4}-2b^{2}y^{4})\cdot \\
&(-b^{4}y^{2}+2b^{2}y^{2}-y^{2}-2-3b^{2}+b^{6})^{2}((b^{2}y^{2}+y^{2}+2+3b^{2}+b^{4})^{2}-(4by+2b^{3}y)^{2})^{2}\cdot \\
&(4+24b^{2}+3b^{12}+76b^{6}+54b^{8}+20b^{10}+4y^{8}+14y^{6}+17y^{4}+12y^{2}+59b^{4}-14b^{6}y^{6}\\
&+19b^{8}y^{4}-12b^{10}y^{2}+4y^{8}b^{4}+8y^{8}b^{2}-22b^{4}y^{6}+46b^{6}y^{4}+6b^{2}y^{6}+4b^{4}y^{2}+20b^{4}y^{4}\\
&-52b^{6}y^{2}+26b^{2}y^{4}-48b^{8}y^{2}+32b^{2}y^{2})^{2}\cdot b^{6}=\\
&16777216\cdot (1+b^{2})^{2}\cdot  T_{1} \cdot  T_{2}^{2} \cdot T_{3}^{2}  \cdot T_{4}^{2} \cdot b^{6}.
\end{align*}

If $b=0$ then 
\begin{align*}
P(x) = -16 (y^{2}+1)(y^{2}+2)^{4}<0.
\end{align*}
This means that $f(x)$ does not have roots (in particular this proves (\ref{lemmain}) because $f(-\infty)<0$). Therefore further we assume that $ b\neq 0$. Next if $y=0$ then 
\begin{align*}
P(x)=-16(b^{2}+1)^{5}(8b^{2}(b^{2}+2)^{2}x^{2}+(3b^{2}+2)^{2}(b^{2}-2)^{2})<0,
\end{align*}
Which again means that $f(x)$ does not have roots and hence $f(x)<0$ in this case as well.
 Next we  assume that $b, y \neq 0$. 
 
 We should investigate the sign of the  discriminant $\Delta$.  First consider the longest term $T_{4}$  in the discriminant. 
\begin{lemma}
We have $T_{4}>0$, i.e., 
\begin{align*}
&4(1+b^{2})^{2}y^{8}-2(1+b^{2})(7b^{4}+4b^{2}-7)y^{6}+(19b^{8}+26b^{2}+20b^{4}+46b^{6}+17)y^{4}\\
&-4(3b^{6}+6b^{4}-2b^{2}-3)(1+b^{2})^{2}y^{2}+(3b^{2}+2)(b^{2}+2)(1+b^{2})^{4}>0. 
\end{align*}
\end{lemma}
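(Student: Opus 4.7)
Set $u := y^{2} \geq 0$ and $v := b^{2} \geq 0$. The hypothesis $|b| \le y$ that accompanies Lemma~\ref{mlemma} becomes the crucial inequality $v \le u$, and the target polynomial is a quartic $T_{4}(u,v)$ in $u$ with coefficients polynomial in $v$. The plan is to perform a Taylor-type expansion at the corner $u = v$ of the admissible region: because $T_{4}$ is polynomial in $u$, one has the exact identity
\[
T_{4}(u,v) \;=\; T_{4}(v,v) \;+\; (u-v)\,Q(u,v),
\]
with $Q \in \mathbb{R}[u,v]$ obtained monomial-wise from $(u^{k} - v^{k})/(u-v) = \sum_{j=0}^{k-1} u^{j}v^{k-1-j}$. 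Since $u - v \ge 0$ throughout the admissible region, it then suffices to show both that $T_{4}(v,v) > 0$ and that $Q(u,v) \ge 0$ whenever $u \ge v \ge 0$.

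First I would compute the diagonal value $T_{4}(v,v)$. Direct substitution $u=v$ collapses the five summands and, after collecting like powers of $v$, produces a single polynomial in $v$; the aim is that all of its coefficients turn out to be strictly positive, which settles the boundary contribution.

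Second, to handle the remainder $Q(u,v)$, perform the further change of variables $u = v + s$ with $s \ge 0$. Expanding
\[
Q(v+s,\,v) \;=\; c_{3}(v)\,s^{3} \;+\; c_{2}(v)\,s^{2} \;+\; c_{1}(v)\,s \;+\; c_{0}(v)
\]
and collecting, the aim is to verify that each $c_{j}(v)$ is itself a polynomial in $v \ge 0$ with nonnegative coefficients. Combining both steps yields $T_{4} = T_{4}(v,v) + (u-v)\,Q(u,v) > 0$ throughout $u \ge v \ge 0$, which is the claim.

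The main obstacle is purely computational bookkeeping, not conceptual. The risk is that some $c_{j}(v)$ could carry a negative monomial, in which case one would iterate the corner-expansion procedure by writing $c_{j}(v) = c_{j}(0) + v\,\tilde c_{j}(v)$ and continuing. For the specific $T_{4}$ at hand, the only coefficients of $T_{4}$ in $u$ that can change sign are $A_{3} = -2(1+v)(7v^{2}+4v-7)$ and $A_{1} = -4(1+v)^{2}(3v^{3}+6v^{2}-2v-3)$; these are moderate enough that the weighted combinations arising in $T_{4}(v,v)$ and in each $c_{j}(v)$ should absorb their signs cleanly, reducing the lemma to a finite termwise nonnegativity check in keeping with the computer-assisted character of the proof.
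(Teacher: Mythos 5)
Your plan is correct, and it takes a genuinely different route from the paper. The paper proves $T_{4}>0$ unconditionally (for all $b$ and all $y$) by running Sturm's theorem on the quartic in $y^{2}$: it computes a full Sturm sequence $g_{0},\dots,g_{4}$, compares sign patterns at $0$ and $\infty$, and splits into cases according to a root $b_{0}\in[0.22,0.23]$ of an auxiliary degree-$10$ polynomial --- a heavy, computer-assisted verification. You instead invoke the standing reduction $|b|\le y$, which is legitimately available here: the entire analysis of $f$, $P$ and $\Delta$ in Section 3 takes place under the WLOG assumptions (the paper itself uses $|b|\le y$ in the $T_{1}$ lemma), and since $T_{4}$ enters the discriminant squared, positivity on that region is all the argument needs. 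Your one unverified point --- whether the coefficients in the corner expansion really come out nonnegative --- does check out, with no need for the iterated fallback: writing $u=y^{2}=v+s$, $v=b^{2}$, $s\ge 0$, one gets the exact identity
\begin{align*}
T_{4}(v+s,v)\;=\;&\,4(1+v)^{2}s^{4}+2\left(v^{3}+5v^{2}+11v+7\right)s^{3}+\left(v^{4}+28v^{3}+62v^{2}+68v+17\right)s^{2}\\
&+\left(10v^{4}+22v^{3}+98v^{2}+66v+12\right)s+\left(4v^{5}+32v^{4}+120v^{3}+108v^{2}+36v+4\right),
\end{align*}
in which every coefficient is a polynomial in $v$ with positive coefficients; hence $T_{4}>0$ whenever $y^{2}\ge b^{2}\ge 0$ (in particular $T_{4}$ is bounded below by $4$ there). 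What your argument buys is a short, hand-checkable proof in place of the Sturm-sequence computation; what it gives up is the unconditional statement of the lemma as printed --- for $|b|>y$ it says nothing --- so if you present it you should state explicitly that the restricted positivity suffices for the discriminant analysis, exactly as with $T_{1}$.
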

\begin{proof}The proof is direct application of Sturm's theorem.
Consider the polynomial
\begin{align*}
&g(y):= 4(1+b^{2})^{2}y^{4}-2(1+b^{2})(7b^{4}+4b^{2}-7)y^{3}+(19b^{8}+26b^{2}+20b^{4}+46b^{6}+17)y^{2}\\
&-4(3b^{6}+6b^{4}-2b^{2}-3)(1+b^{2})^{2}y+(3b^{2}+2)(b^{2}+2)(1+b^{2})^{4}
\end{align*}
for $y \geq 0$. 
Lets compute the Sturm's sequence for it.
We obtain $g_{0}=g, g_{1}=g'(y), g_{2}, g_{3}, g_{4}$.  We would like to show that $g$ does not  have roots on $[0,\infty)$. So we have two vectors of signs at points $0$ and $\infty$: 
\begin{align*}
&\sign (g(0), g_{1}(0), g_{2}(0), g_{3}(0), g_{4}(0))=\sign (+, g_{1}(0),-, g_{3}(0), g_{4}(0))=u(b)\\
&\sign(g(\infty), g_{1}(\infty), g_{2}(\infty), g_{3}(\infty), g_{4}(\infty)) = \sign(+, +, g^{(2)}_{2}(0), g^{(1)}_{3}(0), g_{4}(0))=v(b)
\end{align*} 
where 
\begin{align*}
&g(0)=(3b^{2}+2)(b^{2}+2)(b^2+1)^{4}>0;\\
&g_{1}(0) = -4(3b^6+6b^4-2b^2-3)\cdot (b^2+1)^2;\\
&g_{2}(0) = -\frac{1}{8}(b^2+1)(3b^{10}+82b^8+307b^6+383b^4+158b^2+11)<0;\\
&g_{3}(0) = 32(b^{2}+1)^{2}(3b^{22}-37b^{20}-928b^{18}-74b^{16}+8954b^{14}-4262b^{12}\\
&-35980b^{10}+12864b^8+54811b^6+25171b^4+1044b^2-638)\cdot \\
&(5b^8+200b^6+406b^4+376b^2-11)^{-2};\\
&g_{4}(0)=\frac{1}{16}(-59+1060b^2+2182b^4+560b^6-91b^8+12b^{10})(b^8+b^6-13b^4+11b^2+8)^{2}\cdot \\
&(5b^8+200b^6+406b^4+376b^2-11)^{2}(b^2+1)^{8}\cdot (3b^{24}-108b^{22}-978b^{20}+3700b^{18}+16069b^{16}\\
&-36120b^{14}-78876b^{12}+96712b^{10}+112317b^8-34812b^6-47410b^4-6844b^2+923)^{-2};
\end{align*}
and 
\begin{align*}
&\sign(g(\infty))=\sign(4(b^2+1)^2)=+;\\
&\sign(g_{1}(\infty)) = \sign(16(b^{2}+1)^{2})=+;\\
&\sign(g_{2}(\infty)) = \sign\left( -\frac{5}{8}b^8-25b^6-\frac{203}{4}b^4-47b^2+\frac{11}{8}\right);\\
&\sign(g_{3}(\infty)) = \sign(-32(3b^{24}-108b^{22}-978b^{20}+3700b^{18}+\\
&16069b^{16}-36120b^{14}-78876b^{12}+96712b^{10}+112317b^8-34812b^6\\
&-47410b^4-6844b^2+923);\\
&\sign(g_{4}(\infty))  = \sign(g_{4}(0));
\end{align*}
These vectors of signs  $u(b)$ and $v(b)$ depend on $b$. 
It requires computations   to show that they have the same number of sign changes. Indeed, consider $g_{4}(0)$.  Its sign coincides with sign of 
\begin{align*}
-59+1060b^2+2182b^4+560b^6-91b^8+12b^{10}.
\end{align*}
It  has only one real root  on interval $[0, \infty)$ (again by Sturm's theorem). Applying Sturm's theorem one more time but now  to the interval $[0.22, 0.23]$ one obtains that the positive root $b_{0}$  belongs to this interval. Since $g_{4}(0)<0$ when $b=0$ one obtains that $g_{4}(0)<0$ when $0<|b|<b_{0}$ and $g_{4}(0)>0$ when $|b|>b_{0}$. 

Next consider two cases. First case when $0<|b|<b_{0}$. In that case  it is enough  to show that $g_{3}^{(1)}(0)<0$  and $g_{3}(0)<0$ which is true. Indeed, applying Sturm's theorem to the interval $[0,0.23]$ we see that polynomials $g_{3}^{(1)}(0)$ and $g_{3}(0)$ do not have real roots on $[0, 0.23]$. On the other hand when $b=0$ we have $g_{3}^{(1)}(0)<0$ and $g_{3}(0)<0$ which implies that these polynomials are negative on $[0,0.23]\supset[0,b_{0}]$ and the conclusion follows. 

Next consider another case when $|b|>b_{0}$. In that case since $g_{4}(0)>0$ it will be enough to show that $g_{2}^{(2)}(0)<0$. Applying Sturm's theorem to the interval $[0.22, \infty)$ we see that $g_{2}^{(2)}(0)$ does not have roots. On the other hand when $b \to \infty$ we see that $g_{2}^{(2)}(0)<0$ and therefore $g_{2}^{(2)}(0)<0$ on $[0.22, \infty) \supset [b_{0}, \infty)$. 

Thus we have proved that the long expression  $T_{4}$ in the discriminant never becomes zero. 
\end{proof}

Next we show that $T_{3}>0$. 
\begin{lemma}
We have 
\begin{align*}
&T_{4}=(b^{2}y^{2}+y^{2}+2+3b^{2}+b^{4})^{2}-(4by+2b^{3}y)^{2}>0.
\end{align*}
\end{lemma}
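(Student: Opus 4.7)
The plan is to factor the expression as a difference of two squares and then verify that each factor, viewed as a quadratic in $y$, is everywhere positive. First I would simplify the two bases:
\[
A := b^{2} y^{2} + y^{2} + 2 + 3b^{2} + b^{4} = (1+b^{2}) y^{2} + (1+b^{2})(2+b^{2}) = (1+b^{2})(y^{2} + b^{2} + 2),
\]
and $B := 4by + 2b^{3} y = 2by(b^{2}+2)$. Hence $T_{3} = A^{2} - B^{2} = (A-B)(A+B)$.

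Next I would regard each of $A \pm B$ as a quadratic in $y$:
\[
A \pm B = (1+b^{2})\, y^{2} \pm 2b(b^{2}+2)\, y + (1+b^{2})(b^{2}+2).
\]
The leading coefficient $1+b^{2}$ is positive, so positivity for all real $y$ reduces to showing the discriminant is negative. I would compute
\[
\Delta_{\pm} = 4 b^{2} (b^{2}+2)^{2} - 4(1+b^{2})^{2} (b^{2}+2) = 4(b^{2}+2)\bigl[b^{2}(b^{2}+2) - (1+b^{2})^{2}\bigr] = -4(b^{2}+2),
\]
using the elementary identity $b^{2}(b^{2}+2) - (1+b^{2})^{2} = -1$. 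Since $-4(b^{2}+2) < 0$, both $A+B$ and $A-B$ are strictly positive for all real $b,y$, yielding $T_{3} = (A-B)(A+B) > 0$.

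There is essentially no obstacle here: the argument collapses as soon as one spots the common factor $(1+b^{2})$ hiding inside $A$ (the constant $b^{4}+3b^{2}+2$ factoring as $(1+b^{2})(2+b^{2})$), after which a single discriminant computation finishes everything by virtue of the identity $(1+b^{2})^{2} - b^{2}(b^{2}+2) = 1$. In contrast to the treatment of $T_{4}$, no Sturm-sequence analysis or computer-assisted verification is needed.
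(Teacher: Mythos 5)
Your proof is correct and follows essentially the same route as the paper: both write the expression as $(A+B)(A-B)$ and show each factor, a quadratic in $y$ with positive leading coefficient $1+b^{2}$, is strictly positive, relying on the identity $(1+b^{2})^{2}-b^{2}(b^{2}+2)=1$. The only cosmetic difference is that you certify positivity via the negative discriminant $-4(b^{2}+2)$, while the paper evaluates the parabola at its vertex and gets the minimum value $\frac{2+b^{2}}{1+b^{2}}>0$ --- an equivalent computation.
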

\begin{proof}

It is enough to show that $b^{2}y^{2}+y^{2}+2+3b^{2}+b^{4}\pm (4by+2b^{3}y) >0$.
We can consider only one expression (because the other one is just substitution $b\to -b$ but  we are proving the lemma for all $b$). 
Take $r(y):=b^{2}y^{2}+y^{2}+2-4by+3b^{2}-2b^{3}y+b^{4}$. This is a parabola going upward. Its value on its local minimum is 
\begin{align*}
r\left(-\frac{b(b^{2}+2)}{b^{2}+1}\right) = \frac{2+b^{2}}{1+b^{2}} >0.
\end{align*}
\end{proof}

We left with  the first two nontrivial terms. Lets start from $T_{2}$. 

\begin{align*}
w(y):=-b^{4}y^{2}+2b^{2}y^{2}-y^{2}-2-3b^{2}+b^{6}.
\end{align*}
This expression becomes zero only when 
\begin{align*}
y^{2} = \frac{(b^{2}-2)(b^{2}+1)^{2}}{(b^{2}-1)^{2}}.
\end{align*}

We should exclude cases $b^{2}=1$ because in that cases $w(y)=-4$ so it is never zero.  If $b^{2}=2$ then $y=0$ and in that case we have proved the inequality. So we left that $b^{2}>2$. Thus we have that 
\begin{align*}
y = \frac{(b^{2}+1)\sqrt{b^{2}-2}}{b^{2}-1};
\end{align*}

In this case $P(x)$ has a root of multiplicity $2$ which turns out to be $x=b\sqrt{b^{2}-2}$ . 
We just need to make sure that at this root $f(x)$ is not zero (we can acquire zeros of course by going from $f$ to $P$). Then $f(x)$ may have at most 1 root but since it has negative signs at $\pm \infty$ we are done. 
So assuming $y = \frac{(b^{2}+1)\sqrt{b^{2}-2}}{b^{2}-1}$ and $x=b\sqrt{b^{2}-2}$ we obtain  that in the left hand side  of  (\ref{main}) we have 
\begin{align*}
&x-by-b^{2}+\sqrt{(x+1)^{2}+1+(y+b)^{2}}=\\
&-\frac{b(2\sqrt{b^{2}-2}+b^{3}-b)}{b^{2}-1}+ \sqrt{ \frac{b^{2}(2\sqrt{b^{2}-2}+b^{3}-b)^{2}}{(b^{2}-1)^{2}}}=0. 
\end{align*}

On the other hand lets see what is the right hand side of (\ref{main}):
\begin{align*}
&x-by+b^{2}+\sqrt{(x-1)^{2}+1+(y-b)^{2}}=\\
&-\frac{b(2\sqrt{b^{2}-2}-b^{3}+b)}{b^{2}-1}+\sqrt{\frac{b^{2}(2\sqrt{b^{2}-2}-b^{3}+b)^{2}}{(b^{2}-1)^{2}}}=\\
&-2\cdot \frac{b(2\sqrt{b^{2}-2}-b^{3}+b)}{b^{2}-1} >0 \quad  \text{for} \quad |b| \geq \sqrt{2}
\end{align*}

Thus we left with  $T_{1}$.

\begin{lemma}
If $|b| \leq y$ then 
\begin{align*}
T_{1}=-8-16b^{2}-8b^{4}-8y^{2}+20b^{2}y^{2}+b^{4}y^{2}-2y^{4}-2b^{2}y^{4}<0
\end{align*}
\end{lemma}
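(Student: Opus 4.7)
The plan is to reduce to a quadratic in $y^{2}$ by squaring out $b$. Setting $u=b^{2}$ and $t=y^{2}$, the constraint $|b|\leq y$ becomes $0\leq u\leq t$, and $-T_{1}$ rewrites as
\begin{align*}
Q(t) \;=\; 2(1+u)\,t^{2} \;-\; (u^{2}+20u-8)\,t \;+\; 8(1+u)^{2}.
\end{align*}
So the task is to show that $Q(t)>0$ whenever $t\geq u\geq 0$. Since $1+u>0$, this is an upward-opening parabola in $t$ with unique minimizer $t_{*}=(u^{2}+20u-8)/(4(1+u))$, and its minimum on the half-line $[u,\infty)$ is attained either at the endpoint $t=u$ (when $t_{*}\leq u$) or at the interior point $t=t_{*}$ (when $t_{*}>u$).

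For the endpoint case I would just expand: $Q(u)=u^{3}-10u^{2}+24u+8=u(u-4)(u-6)+8$. On $[0,4]\cup[6,\infty)$ the cubic term is non-negative, giving $Q(u)\geq 8$; on $[4,6]$ the crude bounds $u\leq 6$ and $(u-4)(u-6)\geq -1$ give $u(u-4)(u-6)\geq -6$, hence $Q(u)\geq 2$. So $Q(u)>0$ for every $u\geq 0$.

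For the interior case, completing the square yields
\begin{align*}
Q(t_{*}) \;=\; \frac{64(1+u)^{3}-(u^{2}+20u-8)^{2}}{8(1+u)},
\end{align*}
and the heart of the argument is the algebraic identity
\begin{align*}
64(1+u)^{3}-(u^{2}+20u-8)^{2} \;=\; u(8-u)^{3},
\end{align*}
which is verifiable by direct expansion of both sides. The case hypothesis $t_{*}>u$ simplifies to $3u^{2}-16u+8<0$, forcing $0<u<(8+2\sqrt{10})/3<8$; in this range $u(8-u)^{3}>0$, so $Q(t_{*})>0$. Combining the two cases gives $Q(t)>0$ throughout $\{0\leq u\leq t\}$, i.e.\ $T_{1}<0$, as required.

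I expect the main obstacle to be discovering the clean factorization $u(8-u)^{3}$ on the right-hand side of the displayed identity: a brute-force analysis of the discriminant of $Q$ becomes unwieldy fast, and the proof hinges on recognizing (via expansion, symbolic computation, or by testing the roots $u=0$ and $u=8$) that everything collapses into a single cube. Once this identity is in hand, the remaining work is just the elementary endpoint bound for $Q(u)$ and the elementary verification that $t_{*}>u$ implies $u<8$.
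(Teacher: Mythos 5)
Your proof is correct, and although it works with the same basic object as the paper --- $-T_{1}$ viewed as an upward parabola $Q(t)=2(1+u)t^{2}-(u^{2}+20u-8)t+8(1+u)^{2}$ in $t=y^{2}$, $u=b^{2}$ --- the organization is genuinely different. The paper splits according to the size of $b$: if $b^{4}+20b^{2}-8\le 0$ it argues from $T_{1}(0)<0$ and monotonicity; if $b^{2}<8$ it computes the interior maximum and checks its sign; and if $b^{2}\ge 8$ it writes down the explicit interval of $y$ on which $T_{1}\ge 0$ and rules it out using $y\ge|b|$, which requires the auxiliary inequality $|b|\ge\sqrt{\tfrac14\cdot\frac{b^{4}+20b^{2}-8+\sqrt{b^{2}(b^{2}-8)^{3}}}{b^{2}+1}}$ that the paper only asserts (``one can show''). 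You instead minimize $Q$ directly over the constrained domain $t\ge u$, splitting on whether the vertex $t_{*}$ lies in the domain: the endpoint value is the cubic $Q(u)=u(u-4)(u-6)+8\ge 2$, and at the vertex the identity $64(1+u)^{3}-(u^{2}+20u-8)^{2}=u(8-u)^{3}$ together with the observation that $t_{*}>u$ forces $3u^{2}-16u+8<0$, hence $0<u<(8+2\sqrt{10})/3<8$, gives positivity. This buys two things: the hypothesis $|b|\le y$ enters only through the domain restriction, so no separate root-comparison inequality needs to be verified, and the factorization makes the sign of the interior extremum transparent. (Incidentally, your identity shows that the maximum of $T_{1}$ at $y_{0}$ equals $\frac{b^{2}(b^{2}-8)^{3}}{8(1+b^{2})}$; the paper's displayed value $\frac{1}{8}\frac{b^{2}(b^{2}-8)}{1+b^{2}}$ is missing the cube, a typo that does not affect its sign argument.) All the individual steps in your write-up check out: the displayed identity expands correctly, $Q(u)\ge 2$ on $[4,6]$ and $Q(u)\ge 8$ elsewhere on $[0,\infty)$, and $t_{*}>u$ is indeed equivalent to $3u^{2}-16u+8<0$.
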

\begin{proof}
We have 

\begin{align*}
T_{1}(y)=(-2-2b^{2})y^{4}+(-8+20b^{2}+b^{4})y^{2}-8(1+b^{2})^{2}
\end{align*}
If $-8+20b^{2}+b^{4} \leq 0$ i.e.,  $|b| \leq \sqrt{-10+6\sqrt{3}}$ then we are done because it means that $T_{1}(0)<0$ and $T_{1}'(y) \leq T_{1}'(0) \leq 0$ so the discriminant  $\Delta$ of cubic polynomial $P(x)$ is negative. 

Next assume that $|b| > \sqrt{-10+6\sqrt{3}} >0$. The equation $T_{1}'(y)=0$ has a solution for $y  \geq 0$ and $y_{0} = \sqrt{\frac{1}{4} \frac{-8+20b^{2}+b^{4}}{1+b^{2}}}$. At this point $T_{1}(y)$ attains its maximal value which is 
\begin{align*}
T_{1}(y_{0}) = \frac{1}{8} \frac{b^{2}(b^{2}-8)}{1+b^{2}}
\end{align*}
and the maximal value is still negative for a while, namely if $b^{2}<8$.  Thus we are still fine  even  if $\sqrt{-10+6\sqrt{3}}<|b| < \sqrt{8}$. 

Otherwise, if $|b|\geq \sqrt{8} = 2\sqrt{2}$ we have two positive roots which means that $T_{1}$ has positive sign only if 
\begin{align}\label{interval1}
y \in \left[\sqrt{\frac{1}{4} \cdot \frac{-8+20b^{2}+b^{4}-\sqrt{b^{2}\cdot(b^{2}-8)^{3}}}{b^{2}+1}}, \sqrt{\frac{1}{4} \cdot \frac{-8+20b^{2}+b^{4}+\sqrt{b^{2}\cdot(b^{2}-8)^{3}}}{b^{2}+1}} \right]
\end{align}
and $|b| \geq \sqrt{8}$. 
But one can show that  since $y \geq |b|$ by the general assumption, besides  we also have the inequality 
\begin{align*}
|b| \geq \sqrt{\frac{1}{4} \cdot \frac{-8+20b^{2}+b^{4}+\sqrt{b^{2}\cdot(b^{2}-8)^{3}}}{b^{2}+1}} \quad \text{for} \quad |b| \geq \sqrt{8}
\end{align*}
which means that $y$ cannot belong to that interval (\ref{interval1}), so $T_{1}<0$. 
\end{proof}
Finally negativity of the discriminant $\Delta$ for $P(x)$ just means that $f(x)$ has at most one real root, but the fact that $f$ has negative signs at $\pm \infty$ implies $f\leq 0$. This finishes the proof.


\section{Applications} 
Beckner--Soblev inequality obtained by W.~Beckner in 1988  (see \cite{WB}) says that  for any smooth bounded $f \geq 0$ we have 
\begin{align}\label{bbeckner}
\int_{\mathbb{R}^{n}} f^{p} d\gamma  - \left( \int_{\mathbb{R}^{n}} f d\gamma\right)^{p} \leq \int_{\mathbb{R}^{n}} \frac{p(p-1)}{2} f^{p-2} |\nabla f|^{2} d\gamma, \quad p \in [1,2],
\end{align}
where $d\gamma = \frac{e^{-x^{2}/2}}{\sqrt{2 \pi}}dx$ is the standard $n$-dimensional Gaussian measure. The constant $\frac{p(p-1)}{2}$  in the right hand side of inequality (\ref{bbeckner}) is sharp as one can see on the example of test function $f(x) = e^{\varepsilon x}$ by sending $\varepsilon \to 0$ (here $n=1$).  Beckner--Sobolev inequality (\ref{bbeckner}) interpolates in a sharp way log-Sobolev inequality and Poincar\'e inequality \cite{PIVO11}. Inequality (\ref{bbeckner})  was studied in different settings, for different measures and in different spaces as well. 
For possible references we refer the reader to \cite{ABD, ALS, BGL, BCR1, BCR2, BR1, WB, Bob1, Bob2, BBL, Chaf, IV, KO, RK, WFY}. 

 Recently the authors \cite{PIVO11} improved (\ref{bbeckner}) essentially. Namely if $p=3/2$ we obtained that for any smooth bounded $f \geq 0$ we have 
\begin{align}
&\int_{\mathbb{R}^{n}} f^{3/2} d\gamma - \left(\int_{\mathbb{R}^{n}} f d\gamma \right)^{3/2} \leq  \label{RHS1} \\
&\int_{\mathbb{R}^{n}}\left(f^{3/2} -\frac{1}{\sqrt{2}}(2f-\sqrt{f^{2}+|\nabla f|^{2}})\sqrt{\sqrt{f^{2}+|\nabla f|^{2}}+f^{2}}\right) d\gamma.\nonumber
\end{align}

Integrand in the right hand side of (\ref{RHS1}) is strictly smaller than the integrand in the right hand side of (\ref{bbeckner}) for $p=3/2$. Indeed, notice that 
\begin{align}\label{impr2}
\left(x^{3/2} -\frac{1}{\sqrt{2}}(2x-\sqrt{x^{2}+y^{2}})\sqrt{\sqrt{x^{2}+y^{2}}+x^{2}}\right) \leq  \frac{3}{8}x^{-1/2}y^{2}, \quad x, y \geq 0,
\end{align}
which follows from the homogeneity (we can consider $x=1$). Next plugging $x=f$ and $y=|\nabla f|$ in (\ref{impr2}) and integrating we obtain that (\ref{RHS1}) implies (\ref{beckner}). As one can see the improvement (\ref{impr2}) is essential, for example, consider $y \to \infty$, or consider $x \to 0$.

Theorem~\ref{mth1} provides as with inequality (\ref{RHS1}) on the discrete cube $\{-1,1\}^{n}$ and for any real valued $f$ (not necessarily positive). Indeed, 
we notice that $M(x,0)=x_{+}^{3/2}$ where $x_{+}=\max\{0,x\}$ for any $x \in \mathbb{R}$. Therefore (\ref{mth2}) can be rewritten as follows 
\begin{align}
&\int_{\{-1,1\}^{n}} f^{3/2}_{+} d\mu - \left(\int_{\{-1,1\}^{n}} f d\mu \right)_{+}^{3/2} \leq \label{beckner} \\
&\int_{\{-1,1\}^{n}}\left(f^{3/2}_{+} -\frac{1}{\sqrt{2}}(2f-\sqrt{f^{2}+|\nabla f|^{2}})\sqrt{\sqrt{f^{2}+|\nabla f|^{2}}+f^{2}}\right) d\mu, \nonumber
\end{align}
where $d\mu$ is uniform counting measure on $\{-1,1\}^{n}$.  

Next since the left hand side of (\ref{impr2}) is decreasing function in $x$:
\begin{align*}
&\frac{\partial }{\partial x}\left(x_{+}^{3/2} -\frac{1}{\sqrt{2}}(2x-\sqrt{x^{2}+y^{2}})\sqrt{\sqrt{x^{2}+y^{2}}+x^{2}}\right) = \\
&\frac{3}{2}\left(x_{+}^{1/2} - \sqrt{\frac{\sqrt{x^{2}+y^{2}}+x}{2}} \right)<0,
\end{align*}
and $M(0,y)=\frac{1}{\sqrt{2}}y^{3/2}$ for $y \geq 0$  we obtain from (\ref{beckner}) that  for any $f \geq 0$ 
\begin{align}\label{concentration}
\int_{\{-1,1\}^{n}} f^{3/2} d\mu - \left(\int_{\{-1,1\}^{n}} f d\mu \right)^{3/2}\leq \frac{1}{\sqrt{2}} \int_{\{-1,1\}^{n}} |\nabla f|^{3/2}d\mu.
\end{align}
We notice that the bound (\ref{concentration}) does not follow from Beckner--Sobolev inequality (\ref{bbeckner}) even in the continuous setting (when $x \to 0$ the right hand side of (\ref{impr2})  goes to infinity). On the other hand it is clear that the discrete inequalities (\ref{concentration}) and (\ref{beckner}) are stronger then their continuous versions as one can see from the central limit theorem ($M(x,y)$ is continuous function). Thus applying central limit theorem (as in \cite{GROSS}, \cite{Bob5}) to (\ref{beckner}) we obtain that for any smooth bounded real valued $f$  (not necessarily positive) we have 
\begin{align}
&\int_{\mathbb{R}^{n}} f^{3/2}_{+} d\gamma - \left(\int_{\mathbb{R}^{n}} f d\gamma \right)_{+}^{3/2} \leq \label{ext1} \\
&\int_{\mathbb{R}^{n}}\left(f^{3/2}_{+} -\frac{1}{\sqrt{2}}(2f-\sqrt{f^{2}+|\nabla f|^{2}})\sqrt{\sqrt{f^{2}+|\nabla f|^{2}}+f^{2}}\right) d\gamma. \nonumber
\end{align}
(\ref{ext1}) extends our old  result (\ref{RHS1}) to the functions taking negative values as well.

 \subsection*{Acknowledgements} We are very grateful to Fedor Petrov. 


\begin{thebibliography}{9}







\bibitem{ABD} A.~Arnold, J.~P.~Bartier, J.~Dolbeault, \emph{Interpolation between logarithmic Sobolev and Poincar\'e inequalities}, Commun. Math. Sci. \textbf{5} (2007) 971--979. 
\bibitem{BGL} D.~Bakry, I.~Gentil, M.~Ledoux, \emph{Analysis and Geometry of Markov Diffusion Operators}, Grundlehren der Mathematischen Wissenschaften \textbf{348}. Springer, Cham. 
\bibitem{Maurey11} F.~Barthe, B.~Maurey,  \emph{Some remarks on isoperimetry of Gaussian type}. Ann. Inst. H. Poincar\'e Probab. Satist., 36 (4) :  419--434, 2000.  
\bibitem{BCR1} F.~Barthe, P.~Cattiaux, C.~Roberto, \emph{Interpolated inequalities between exponential and Gaussian, Orlicz hypercontractivity and isoperimetry}, (to appear in Revista Mat. Iberoameicana). arXiv:0407219 
\bibitem{BCR2} F.~Barthe, P.~Cattiaux, C.~Roberto, \emph{Isoperimetry between exponential and Gaussian}, arXiv:0601475
\bibitem{BR1} F.~Barthe, C.~Roberto, \emph{Sobolev inequalities for probability measures on the real line}, Studia Math., 159(3):481--497, 2003
\bibitem{WB} W.~Beckner, \emph{A generalized Poincar\'e inequality for Gaussian measures}, Proceedings of the American Mathematical Society 105, no. 2, 397--400 (1989)
\bibitem{Bob5} S.~G.~Bobkov, \emph{An isoperimetric inequality on the discrete cube, and an elementary proof of the isoperimetric inequality in Gauss space}, Ann. Probab. 25 (1997), no. 1, pp. 206--214. 
\bibitem{Bob1} S.~G.~Bobkov, P.~Tetali, \emph{Modified log-sobolev inequalities, mixing and hypercotractivity}, In Proceedings of the thirty-fifth annual ACM symposium on Theory of computing, pages 287--296. ACM, 2003. 
\bibitem{Bob2} S.~G.~Bobkov, F.~G\"otze, \emph{Exponential integrability and transportation cost related to logarithmic Sobolev inequalities}, J. Funct. Anal. 163:1--28, 1999



\bibitem{BBL} S.~Boucheron, O.~Bousquet, G.~Lugosi, P.~Massart,  \emph{Moment inequalities for functions of independent random variables}, Ann. Probab., to appear, 2004
\bibitem{Chaf} D.~Chafai,  \emph{On $\Phi$-entropies and $\Phi$-Sobolev inequalities}, preprint, 2002.
\bibitem{GROSS} L.~Gross,  \emph{Logarithmic Sobolev inequalities and contractivity properties of semigroups.} Varenna (1992). Lecture Notes in Math. 1563, 54--88. Springer, Berlin. 
\bibitem{IV} P.~Ivanisvili, A.~Volberg, \emph{Isoperimetric functional inequalities via the maximum principle: the exterior differential systems approach}, arXiv: 1511.06895
\bibitem{PIVO11} P.~Ivanisvili, A.~Volberg,  \emph{Improving Beckner's bound via Hermite functions}, arXiv:1606:08500, (2016) 
\bibitem{KO} A.~Kolesnikov, \emph{Modified Log-Sobolev inequalities and isoperimetry}, arXiv: 0608681
\bibitem{RK} R.~Latala, K.~Oleszkiewicz, \emph{Between Sobolev and Poincar\'e}. Geometric Aspects of Functional Analysis. \emph{Lect. Notes Math}., 1745: 147--168, 2000 

\bibitem{ALS} P.~D.~Pelo, A.~Lanconelli, A.~I.~Stan, \emph{An extension of the Beckner's type Poincar\'e inequality to convolution measures on abstract Wiener spaces}, arXiv: 1409.5861
\bibitem{WFY} F.~Y.~Wang, \emph{A generalization of Poincar\'e and log-Sobolev inequalities}, Potential Analysis \textbf{22} (2005) 1-15 










\end{thebibliography}
\end{document}